\DeclareSymbolFont{bbold}{U}{bbold}{m}{n}
\DeclareSymbolFontAlphabet{\mathbbold}{bbold}
\newcommand{\ind}{\mathbbold{1}}
\numberwithin{equation}{section}
\theoremstyle{plain} \newtheorem{theorem}{Theorem}[section]
\theoremstyle{plain} 
\theoremstyle{plain} \newtheorem{corollary}[theorem]{Corollary}
\theoremstyle{plain} \newtheorem{proposition}[theorem]{Proposition}
\theoremstyle{remark} \newtheorem{remark}[theorem]{Remark}
\theoremstyle{definition} \newtheorem{example}[theorem]{Example}
\theoremstyle{definition} \newtheorem{definition}[theorem]{Definition}
\theoremstyle{remark} 
\newcommand{ \R}{ \mathbb R }
\newcommand{\E}{ \mathbb E}
\renewcommand{\Pr}{ \mathbb P}
\newcommand{\Qr}{ \mathbb Q}
\newcommand {\cG}{\mathcal{G}}
\newcommand {\cC}{\mathcal{C}}
\newcommand {\cD}{\mathcal{D}}
\newcommand{\bR}{\mathbb R}
\newcommand{\bP}{\mathbb P}
\def\be{\begin{equation}}
\def\ee{\end{equation}}
\begin{document}
\title{Transient one-dimensional diffusions conditioned to converge to a different limit point}

\author[A. Hening]{Alexandru Hening }
\thanks{A.H. was supported by EPSRC grant EP/K034316/1}
\address{Department of Statistics \\
 1 South Parks Road \\
 Oxford OX1 3TG \\
 United Kingdom}
 \email{hening@stats.ox.ac.uk}
 \keywords{Diffusion, conditioning, $Q$-process, Doob $h$-transform}

\maketitle

\begin{abstract}
Let $(X_t)_{t\geq 0}$ be a regular one-dimensional diffusion that models a biological population. If one assumes that the population goes extinct in finite time it is natural to study the $Q$-process associated to $(X_t)_{t\geq 0}$. This is the process one gets by conditioning $(X_t)_{t\geq 0}$ to survive into the indefinite future.

The motivation for this paper comes from looking at populations that are modeled by diffusions which do not go extinct in finite time but which go `extinct asymptotically' as $t\rightarrow \infty$.
We look at transient one-dimensional diffusions $(X_t)_{t \geq 0}$ with state space $I=(\ell, \infty)$ such that $X_t\rightarrow \ell$ as $t\rightarrow \infty$, $\Pr^x$-almost surely for all $x\in I$. We `condition' $(X_t)_{t \geq 0}$ to go to $\infty$ as $t\rightarrow \infty$ and show that the resulting diffusion is the Doob $h$-transform of $(X_t)_{t\geq 0}$ with $h=s$ where $s$ is the scale function of $(X_t)_{t\geq 0}$. Finally, we explore what this conditioning does in two examples.
\end{abstract}

\section{Introduction}

Let $(X_t)_{t\geq 0}$ be a one dimensional diffusion with state space $I:=(0,\infty)$. If one assumes that $(X_t)_{t\geq 0}$ models a population, then the hitting time of 0,
\[
T_0:=\inf\{t\geq 0: X_t=0\},
\]
can be interpreted as the time of extinction. Throughout the paper we take $(X_t)_{t\geq 0}$ to be the canonical process on $C(I)$ so that our diffusion is described by a family of probability measures \[(\Pr^x)_{x\in I}\]
for which
\[
\Pr^x\{X_0=x\}=1.
\]

In mathematical biology and ecology it is of interest to know what the distribution of the population looks like before extinction. Consequently, if $\Pr^x\{T_0<\infty\}=1$ for all $x\in I$, it is relevant to study the so-called $Q$-process which corresponds to conditioning the process $(X_t)_{t\geq 0}$ to survive into the indefinite future.
\begin{definition}
For any $s\geq 0$ and any Borel set $B\subset C((0,s])$ consider the limit
\[
Q^x\{X\in B\} = \lim_{t\rightarrow \infty} \Pr^x\{X\in B~|~T_0>t\}.
\]
When this limit exists, $(Q^x)_{x\in I}$ defines the law of a diffusion, called the $Q$-process, that never reaches 0. See \cite{Ca09}
\end{definition}
Conditions for the existence and uniqueness of the $Q$-process have been studied in \cite{Co95, Ca09, CV14} while some examples of applications to ecology and genetics appear in \cite{L08, E14}.

One of the simplest ways of modeling a population living in one patch is assuming it is given by a geometric Brownian motion $(Y_t)_{t\geq 0}$, see \cite{E13, EHS14}. Then $(Y_t)_{t\geq 0}$ can be recovered as the solution to the following SDE
\begin{equation}\label{e:pop1}
\begin{split}
dY_t &= \mu Y_t\,dt + \sigma Y_t dW_t,\\
Y_0 &= y>0.\\
\end{split}
\end{equation}
where $(W_t)_{t \ge 0}$ is a standard Brownian motion; that is, there
is exponential growth with a rate that varies stochastically in time.  Here
$Y_t = y \exp\left(\sigma W_t + \left(\mu-\frac{\sigma^2}{2}\right) t\right)$, $t \ge 0$.
Starting from any $y\in (0,\infty)$
the process $(Y_t)_{t\geq 0}$ does not hit $0$: the population will not go extinct in finite time.
The long term behavior of \eqref{e:pop1} is determined by the {\em stochastic growth rate} $\mu-\frac{\sigma^2}{2}$.
\begin{itemize}
\item If $\mu-\frac{\sigma^2}{2}>0$ then $\lim_{t\rightarrow \infty}Y_t=\infty$, $\Pr^y$-almost surely for all $y\in(0,\infty)$.
\item If $\mu-\frac{\sigma^2}{2}<0$ then $\lim_{t\rightarrow \infty}Y_t=0$, $\Pr^y$-almost surely for all $y\in(0,\infty)$.
\item If $\mu-\frac{\sigma^2}{2}=0$ then $(Y_t)_{t\geq 0}$ is null-recurrent.
\end{itemize}

In the cases where the geometric Brownian motion $(Y_t)_{t\geq 0}$ is transient,
one type of conditioning is to consider the limit as
$t \to \infty$ of the process obtained by conditioning on the event $\{Y_t = a\}$ for some fixed $a > 0$.  It is not hard to see
by direct computation using transition densities that this conditional distribution does not depend on $\mu$,
that the limit exists, the limit does not depend on $a$, and the
limit is just the (unconditional) distribution of $Y$ with $\mu-\frac{\sigma^2}{2}=0$.

In general, if $(X_t)_{t\geq 0}$ has transition densities $p_t(x,y)$ with respect to some reference measure $m$, then
$(X_s)_{0 \le s \le t}$ conditional on the event $\{X_t = a\}$ is a time-inhomogeneous Markov process with
transition densities
\[
q_{r,s}^{(t)}(x,y) := \frac{p_{s-r}(x,y) p_{t-s}(y,a)}{p_t(x,a)}.
\]
Under appropriate conditions, the limit
\[
h_u(z) := \lim_{v \to \infty} \frac{p_{v+u}(z,a)}{p_v(a,a)}
\]
exists, is strictly positive, and $\bP^x[h_u(X_w)] = h_{u+w}(x)$ for all $x$ in
the state space and all $u \in \bR_+$; see \cite{CW05, Bor02}.  In such situations, the limit of the conditioned processes
is the {\em Doob $h$-transform} process, a time-homogeneous Markov process with transition densities
\[
q_t(x,y) := h(x)^{-1} p_t(x,y) h(y).
\]
If one adds competition for resources in \eqref{e:pop1} then the population $(\tilde Y_t)_{t\geq 0}$ is modeled by
\begin{equation}\label{e:pop2}
\begin{split}
d\tilde Y_t &= (\mu \tilde Y_t-\kappa \tilde Y_t^2)\,dt + \sigma \tilde Y_t dW_t,\\
\tilde Y_0 &= \tilde y>0.\\
\end{split}
\end{equation}
where $\kappa>0$ is the strength of intraspecific competition. One can show, see \cite{EHS14}, that almost surely the population $(\tilde Y_t)_{t\geq 0}$ does not go extinct in finite time and that when $\mu-\frac{\sigma^2}{2}<0$ one has
\[
\lim_{t\rightarrow \infty}\tilde Y_t=0
\]
$\Pr^{\tilde y}$-almost surely for all $\tilde y\in(0,\infty)$.

In the models defined by \eqref{e:pop1} and \eqref{e:pop2} with $\mu-\frac{\sigma^2}{2}<0$ the populations stay positive for all $t\geq 0$ and go extinct asymptotically as $t\rightarrow\infty$.
As a result we felt that it is interesting to study an analogue of the $Q$-process, where we condition that the population does not go extinct asymptotically.
\begin{example}\label{e:1}
Suppose $(X_t)_{t\geq 0}$ is a Brownian motion with negative drift $-\mu<0$. That is,
\[
X_t= W_t-\mu t, ~t\geq 0
\]
where $(W_t)_{t\geq 0}$ is a standard Brownian motion. It is well known that
\[
\lim_{t\rightarrow\infty} X_t = -\infty
\]
$\Pr^x$-almost surely for all $x\in\R$. If we condition  $(X_t)_{t\geq 0}$ on the event $\{X_T\in(a,\infty)\}$ and then let $T\rightarrow \infty$ and $a\rightarrow \infty$ such that $\frac{a}{T}\rightarrow c$ it is known that one gets a process $(Z_t)_{t\geq 0}$ that is Brownian motion with  drift $c$,
\[
Z_t=W_t+c t, ~t\geq 0.
\]
In other words, if we take Brownian motion with negative drift $-\mu$ and condition it `to go to $+\infty$' we get a process that is Brownian motion with positive drift $c>0$.
\end{example}

We show, in Theorem \ref{t:generator_diffusion}, that when we have a diffusion $(\hat X_t)_{t\geq 0}$ on $(\ell,\infty)$ such that
\[
\hat X_t\rightarrow \ell
\]
as $t\rightarrow \infty$ almost surely for any starting point $x\in (\ell,\infty)$ and we suitably `condition' $\hat X$ to go to $\infty$ as $t\rightarrow \infty$ what we get is the $h$-transform of $(\hat X_t)_{t\geq 0}$ with $h=s$ where $s$ is the scale function of $(\hat X_t)_{t\geq 0}$. The conditioning works as follows: we let $\zeta$ be an independent exponential with rate $\lambda$, condition $(X_t)_{t\geq 0}$ on $\{ \hat X_{\zeta-}\in(a,\infty)\}$, kill the process at $\zeta$, and then let $a\rightarrow \infty$ followed by $\lambda\rightarrow\infty$. Our result is similar to Proposition 3.2 from \cite{PR12}.

As examples we explore what this conditioning does when applied to the models \eqref{e:pop1} and \eqref{e:pop2}. We show that, as expected, the conditioning of a Brownian motion with negative drift $(W_t-\mu t)_{t\geq 0}$ gives us a Brownian motion with positive drift $(W_t+\mu t)_{t\geq 0}$. For the model \eqref{e:pop2} we show that the conditioning makes our process look, for large values, like
\[
d\bar Y_t=(\mu \bar Y_t + \kappa \bar Y_t^2)\,dt + \sigma \bar Y_t \,dW_t
\]
while for small values it behaves similar to

\[
dZ_t = ((\sigma^2-\mu) Z_t-\kappa Z_t^2)\,dt+\sigma Z_t \,dW_t.
\]

\section{Limits of diffusions conditioned to go to infinity at their terminal times}\label{s:infinity}

We denote by $C(I), C_b(I)$ the continuous and the continuous bounded functions on $I$ and by $\mathcal{B}(I)$ the Borel subsets of $I$.

A \textit{diffusion} is a strong Markov process with continuous paths. For the sake of completeness we present some basic facts about diffusions in Appendix A.

\begin{remark}
Suppose the diffusion $(X_t)_{t\geq 0}$ has null killing measure and scale and speed measures that
are absolutely continuous with respect to Lebesgue measure
\begin{itemize}
  \item  $m(dx)=m'(x) \, dx$.
  \item  $s(dx)=s'(x) \, dx$.
  \item  $k\equiv 0$.
\end{itemize}
If furthermore, one assumes $m'\in C(I)$ and $s'\in C^1(I)$ then one can show that the infinitesimal generator $\cG:\cD(\cG)\mapsto\cC_b(I)$ of $(X_t)_{t\geq 0}$ is a second order differential operator
\[
\cG f(x) = \frac{1}{2}\sigma^2(x)\partial_{xx}f(x) + b(x)\partial_x f(x)
\]
where
\[
m'(x) = 2\sigma^{-2}(x)e^{B(x)}, s'(x)=e^{-B(x)}
\]
with $B(x):=\int^x2\sigma^{-2}(y)b(y)\,dy$. The domain $\cD(\cG)$ consists of all functions in $\cC_b(I)$ such that $\cG f\in\cC_b(I)$ together with the appropriate boundary conditions.
\end{remark}

The following Theorem is a generalization of Example \ref{e:1}. A similar result appears in Proposition 3.2 of \cite{PR12}.
\begin{theorem}\label{t:generator_diffusion}
Let $(X_t)_{t\geq 0}$ be a regular one-dimensional diffusion on $(\ell,\infty)$ with semigroup $(P_t)_{t\geq 0}$, scale function $s$ and killing measure $k\equiv 0$.
We make the following assumptions:
\begin{itemize}
\item The boundary points $\{\ell,\infty\}$ are inaccessible.
\item $(X_t)_{t\geq 0}$ is transient and $\lim_{t\rightarrow \infty}X_t=\ell$, $\Pr^x$- almost surely for all $x\in(\ell,\infty)$.
\item  $\zeta$ is an independent exponential random variable with rate $\lambda$.
\end{itemize}
Kill $(X_t)_{t\geq 0}$ at $\zeta$ and condition on $\{X_{\zeta-}\in(a,\infty)\}$. The limit as $a\rightarrow \infty$ followed by the limit as $\lambda\downarrow 0$ of the conditioned killed process is a diffusion $(Z_t)_{t\geq 0}$ with semigroup $(Q_t)_{t\geq 0}$ given by
\[
Q_t f = \frac{1}{s} P_t(sf).
\]

Furthermore if the generator of $(X_t)_{t\geq 0}$ acts on functions with compact support in $C^2((\ell,\infty))$ as 

\[
\cG f(x):=\frac{1}{2} \sigma^2(x) \frac{d^2f}{dx^2}(x) + b(x) \frac{df}{dx}(x)
\]
with $b, \sigma\in C(I)$ and $\sigma^2(x)>0$ for all $x\in (\ell,\infty)$ then the generator of $(Z_t)_{t\geq 0}$ acts on functions with compact support in $C^2((\ell,\infty))$ as 

\begin{equation}\label{e:generator}
\cG^s f(x):=\frac{1}{2} \sigma^2(x) \frac{d^2f}{dx^2}(x)
+
\left[b(x)
+
\frac{\sigma^2(x)\exp\left(-\int_w^x 2 \frac{b(z)}{\sigma^2(z)} \, dz\right)}
{\int_\ell^x \exp\left(-\int_w^y 2 \frac{b(z)}{\sigma^2(z)} \, dz\right) \, dy}
\right]
\frac{df}{dx}(x),
\end{equation}
where $w\in I$ is an arbitrary reference point. 
\end{theorem}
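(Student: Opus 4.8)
The plan is to follow the conditioning literally, tracking the time-$t$ one-dimensional marginals of the conditioned killed process as a (sub-Markov) semigroup $Q^{(a,\lambda)}_t$, then pass to the limits $a\to\infty$ and $\lambda\downarrow 0$, and finally read off the generator. Writing $U_\lambda=\int_0^\infty e^{-\lambda t}P_t\,dt$ for the resolvent, I would first record two elementary computations. Since $\zeta$ is independent and exponential and the paths are continuous (so $X_{\zeta-}=X_\zeta$), conditioning on $\{\zeta=t\}$ and applying the Markov property at time $t$ gives, for bounded $f\ge 0$,
\[
\E^x\!\left[f(X_t)\,\ind\{t<\zeta\}\,\ind\{X_{\zeta-}>a\}\right]=\lambda e^{-\lambda t}\,P_t\!\left(f\cdot U_\lambda\ind_{(a,\infty)}\right)(x),
\]
while $\Pr^x\{X_{\zeta-}>a\}=\lambda\,U_\lambda\ind_{(a,\infty)}(x)$. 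Normalizing the observable $f(X_t)\ind\{t<\zeta\}$ by the probability of the conditioning event (the object is sub-Markov, reflecting that the conditioned path may already be dead at time $t$) yields
\[
Q^{(a,\lambda)}_t f(x)=\E^x\!\left[f(X_t)\,\ind\{t<\zeta\}\mid X_{\zeta-}>a\right]=e^{-\lambda t}\,\frac{P_t\!\left(f\cdot U_\lambda\ind_{(a,\infty)}\right)(x)}{U_\lambda\ind_{(a,\infty)}(x)}.
\]

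Next I would let $a\to\infty$. For $y\le a$, the strong Markov property at $T_a=\inf\{t:X_t=a\}$ factorizes the $\lambda$-potential as $U_\lambda\ind_{(a,\infty)}(y)=\E^y[e^{-\lambda T_a}\ind\{T_a<\infty\}]\,U_\lambda\ind_{(a,\infty)}(a)=\tfrac{\varphi_\lambda(y)}{\varphi_\lambda(a)}\,U_\lambda\ind_{(a,\infty)}(a)$, where $\varphi_\lambda$ is the increasing solution of $\cG\varphi_\lambda=\lambda\varphi_\lambda$ and I use the classical identity $\E^y[e^{-\lambda T_a}\ind\{T_a<\infty\}]=\varphi_\lambda(y)/\varphi_\lambda(a)$. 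The factor $U_\lambda\ind_{(a,\infty)}(a)/\varphi_\lambda(a)$ does not depend on $y$, so it cancels between numerator and denominator, and on $\{X_t\le a\}$ the ratio collapses to $e^{-\lambda t}P_t(f\varphi_\lambda)(x)/\varphi_\lambda(x)$. Bounding $U_\lambda\ind_{(a,\infty)}\le\lambda^{-1}$ and using transience ($\Pr^x\{X_t>a\}\to0$) to show the contribution of $\{X_t>a\}$ is negligible after normalization, I obtain
\[
\lim_{a\to\infty}Q^{(a,\lambda)}_t f(x)=\frac{e^{-\lambda t}}{\varphi_\lambda(x)}\,P_t(f\varphi_\lambda)(x)=:Q^{(\lambda)}_t f(x),
\]
which is precisely the Doob transform of $(X_t)$ by the $\lambda$-excessive function $\varphi_\lambda$.

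Then I would let $\lambda\downarrow0$. As $\lambda\downarrow0$ the equation $\cG\varphi_\lambda=\lambda\varphi_\lambda$ degenerates to $\cG\varphi=0$, whose increasing solution is the scale function; since the quotient above is homogeneous in $\varphi_\lambda$ I may normalize at an interior point $w$ by $\varphi_\lambda(w)=s(w)$, and continuity of ODE solutions in $\lambda$ gives $\varphi_\lambda\to s$ pointwise. Passing to the limit (with $e^{-\lambda t}\to1$, monotone convergence in the numerator, and extending to general bounded $f$ by linearity) produces $Q_t f(x)=s(x)^{-1}P_t(sf)(x)$, the $h$-transform with $h=s$; because $\cG s=0$ this $(Q_t)$ is automatically a semigroup, as $Q_tQ_r f=s^{-1}P_t\!\left(s\cdot s^{-1}P_r(sf)\right)=s^{-1}P_{t+r}(sf)=Q_{t+r}f$. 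For the generator I would differentiate at $t=0$ to get $\cG^s f=s^{-1}\cG(sf)$, expand $\cG(sf)=\tfrac12\sigma^2(sf)''+b(sf)'$, and use $\tfrac12\sigma^2 s''+bs'=\cG s=0$ to kill the zeroth-order term, leaving $\cG^s f=\tfrac12\sigma^2 f''+\left(b+\sigma^2 s'/s\right)f'$. Substituting $s'(x)=\exp\!\left(-\int_w^x 2b/\sigma^2\right)$ and $s(x)=\int_\ell^x\exp\!\left(-\int_w^y2b/\sigma^2\right)dy$ (the normalization $s(\ell+)=0$ being forced by transience to $\ell$) turns the drift into the bracketed expression in \eqref{e:generator}.

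I expect the main obstacle to be the rigor of the two limits rather than any isolated identity. In the $a\to\infty$ step one must show that the mass contributed by $\{X_t>a\}$ is asymptotically negligible relative to the (vanishing) normalizing denominator; this is exactly where transience and the inaccessibility of $\infty$ are used. In the $\lambda\downarrow0$ step one must justify $\varphi_\lambda\to s$ up to the singular endpoint $\ell$ together with the interchange of the limit and $P_t$. A secondary point is upgrading convergence of the one-time marginals to convergence of the full finite-dimensional distributions; this should be clean, since for each $(a,\lambda)$ the conditioned killed process is itself Markov — an $h$-transform by $U_\lambda\ind_{(a,\infty)}$ — so its finite-dimensional laws factor through the transition kernels whose limits have just been identified, and the limiting law is that of the $s$-transform diffusion $(Z_t)_{t\ge0}$.
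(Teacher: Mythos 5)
Your proposal is correct and follows the same overall strategy as the paper: identify the conditioned killed semigroup as the Doob transform by the $\lambda$-excessive function $x\mapsto\Pr^x\{X_{\zeta-}>a\}=\lambda U_\lambda\ind_{(a,\infty)}(x)$ (the paper packages this as Corollary \ref{t:Mar}), let $a\to\infty$ to get the transform by the increasing $\lambda$-harmonic function, let $\lambda\downarrow 0$ to get the $s$-transform, and then compute the generator. You differ in two local pieces of machinery, both legitimate. First, where the paper reduces to natural scale and factorizes the potential through the Rogers--Williams product form of the Green function, $r_\lambda(x,y)=c_\lambda\psi_\lambda^+(x\wedge y)\psi_\lambda^-(x\vee y)$, you factorize via the strong Markov property at $T_a$ and the identity $\E^y[e^{-\lambda T_a}\ind\{T_a<\infty\}]=\varphi_\lambda(y)/\varphi_\lambda(a)$; these are equivalent (your $\varphi_\lambda$ is $\psi_\lambda^+$ up to a constant), but your version avoids the natural-scale reduction and the scale-independence remark the paper then needs. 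Second, for the generator you compute directly $\cG^s f=s^{-1}\cG(sf)$ and kill the zeroth-order term with $\cG s=0$, whereas the paper invokes its Appendix B theorem on the characteristics of $h$-transforms, whose proof is outsourced to a reference; your computation is more elementary and self-contained. One point to repair: the appeal to ``continuity of ODE solutions in $\lambda$'' for $\varphi_\lambda\to s$ is your weakest link and is also unnecessary --- your own hitting-time identity gives, by monotone convergence as $\lambda\downarrow 0$, $\varphi_\lambda(y)/\varphi_\lambda(z)\to\Pr^y\{T_z<\infty\}=s(y)/s(z)$ once $s$ is normalized by $s(\ell+)=0$ (possible because transience to $\ell$ forces $s(\ell+)>-\infty$), and since only ratios of $\varphi_\lambda$ enter the semigroup this suffices; this is exactly the paper's route through hitting probabilities in \eqref{e:hitting}--\eqref{e:scale_HH}. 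Finally, the interchange-of-limits issues you flag in the $a\to\infty$ step (negligibility of the contribution of $\{X_t>a\}$ relative to the vanishing normalization) are treated at the same level of informality in the paper's own proof, so acknowledging rather than resolving them leaves you no less rigorous than the original.
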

\begin{remark}
We note that we get the same result in Theorem \ref{t:generator_diffusion} if we take limits in the other order, namely if we let $\lambda\downarrow 0$ followed by $a\rightarrow \infty$.
\end{remark}

\begin{proof}
Suppose first that  $(X_t)_{t\geq 0}$ is a one-dimensional diffusion in natural scale.
By Theorem V.50.7 from \cite{RW00}, the Green function (or the density of the resolvent of $(X_t)_{t\geq 0}$ against the speed measure) is given by
\begin{equation}\label{e:resolvent_density}
r_\lambda(x,y) =
\begin{cases} c_\lambda \psi_\lambda^+(x) \psi_\lambda^-(y) &\mbox{if } x \le y\\
  c_\lambda \psi_\lambda^-(x) \psi_\lambda^+(y)  & \mbox{if } y \le x,
\end{cases}
\end{equation}
for a constant $c_\lambda$ and
certain functions $\psi_\lambda^+$ and $\psi_\lambda^-$.  Suppose that
$A = (a,\infty)$ for some $a$ and that $\zeta$ is an independent exponential random variable with rate $\lambda>0$. Using the technical result, Corollary \ref{t:Mar} from Appendix B,  we know that $(X_t)_{t\geq 0}$ killed at $\zeta$ and conditioned on $\{X_{\zeta-} \in A\}$ is a diffusion with semigroup $(\bar P_t)_{t \ge 0}$
\begin{equation}\label{e:semigroup_diff}
\begin{split}
\bar P_t g(x)
& =
\frac{1}{\Pr^x\{X_{\zeta-} \in A\}}
\E^x[g(X_t) \ind\{\zeta > t\} \Pr^{X_t}\{X_{\zeta-} \in A\}] \\
& =
\frac{1}{\int_a^\infty r_\lambda(x,y) \, m(dy)}
\E^x[g(X_t)  \int_a^\infty r_\lambda(X_t, y) \, m(dy)]. \\
\end{split}
\end{equation}
Assume that $x < a$. Then, using \eqref{e:semigroup_diff} and \eqref{e:resolvent_density} we have
\begin{eqnarray*}
\bar P_t g(x)
& = &
\frac{1}{c_\lambda \psi_\lambda^+(x) \int_a^\infty \psi_\lambda^-(y) \, m(dy)}\times \\
&~&~\times~
\E^x\left[g(X_t)  \left(\int_a^{a \vee X_t} r_\lambda(X_t, y) \, m(dy)
+ c_\lambda \psi_\lambda^+(X_t) \int_{a \vee X_t}  \psi_\lambda^-(y) \, m(dy)\right)\right].
\end{eqnarray*}
Then
\begin{equation*}
\lim_{a \to \infty}\bar P_t g(x) = \frac{1}{\psi_\lambda^+(x)}
\E^x[g(X_t) \psi_\lambda^+(X_t)]
\end{equation*}
which yields

\begin{equation}\label{e:psi_hitting}
\lim_{\lambda \downarrow 0}\lim_{a \to \infty}\bar P_t g(x) =  \frac{1}{\psi_0^+(x)}
\E^x[g(X_t) \psi_0^+(X_t)].
\end{equation}
We know that
\[
\frac{\psi_0^+(y)}{\psi_0^+(x)}
=
\frac{\Pr^y\{T_z < \infty\}}{\Pr^w\{T_z < \infty\}}
\bigg /
\frac{\Pr^x\{T_z < \infty\}}{\Pr^w\{T_z < \infty\}}
\]
where $T_z$ is the hitting time of $z$.
We have assumed that we
are working in natural scale, but we see that this last quantity does not
depend on that assumption.  By assumption we have a process
on the interval $(\ell,\infty)$ such that the endpoints are inaccessible and
$X_t \to \ell$ as $t \to \infty$.  Then, by the definition of the scale
function $s$,
\begin{equation}\label{e:hitting}
\Pr^y\{T_z < \infty\}
= \lim_{u \downarrow \ell} \frac{s(y) - s(u)}{s(z) - s(u)},
\end{equation}
with similar formulas for the other hitting probabilities.
So, for the original process,
\begin{equation}\label{e:scale_H}
\frac{\Pr^y\{T_z < \infty\}}{\Pr^w\{T_z < \infty\}}
\bigg /
\frac{\Pr^x\{T_z < \infty\}}{\Pr^w\{T_z < \infty\}}
=
\lim_{u \downarrow \ell} \frac{s(y) - s(u)}{s(x) - s(u)}.
\end{equation}

The assumption that $(X_t)_{t\geq 0}$ wanders off to the left boundary point $\ell$
implies that $\lim_{u \downarrow \ell} s(u) \ne -\infty$ and so
we can assume that the scale function is chosen so that this limit is $0$. Equation \eqref{e:scale_H} becomes

\begin{equation}\label{e:scale_HH}
\frac{\Pr^y\{T_z < \infty\}}{\Pr^w\{T_z < \infty\}}
\bigg /
\frac{\Pr^x\{T_z < \infty\}}{\Pr^w\{T_z < \infty\}}
=
\frac{s(y)}{s(x)}.
\end{equation}

Our limit semigroup is therefore
\begin{equation*}
\begin{split}
Q_t g(x) &= \frac{1}{\psi_0^+(x)} \E^x[g(X_t) \psi_0^+(X_t)]\\
&=\frac{1}{s(x)} \E^x[g(X_t) s(X_t)].\\
\end{split}
\end{equation*}
Note that this is just the $h$-transform with $h:=s$ of the semigroup of $(X_t)_{t\geq 0}$.
If  $(X_t)_{t\geq 0}$ has generator $\cG$ acting on functions with compact support in $C^2((\ell,\infty))$ as
\[
\cG f(x)=\frac{1}{2} \sigma^2(x) \frac{d^2f}{dx^2}(x) + b(x) \frac{df}{dx}(x)
\]
then the scale function will be
\[
s(x) = \int_\ell^x \exp\left(-\int_w^y 2 \frac{b(z)}{\sigma^2(z)} \, dz\right) \, dy,
\]
while the speed measure will have density
\[
m'(x) = 2\sigma^{-2}(x)\exp\left(-\int_w^x 2 \frac{b(z)}{\sigma^2(z)} \, dz\right)
\]
where $w$ is an arbitrary reference point.
Note that
\[
\cG s(x) =0
\]
for all $x\in (\ell,\infty)$.

According to Theorem \ref{t:h_diffusion_chars} from Appendix B we see that the generator associated with
the semigroup $(Q_t)_{t \ge 0}$ acts on functions with compact support in $C^2((\ell,\infty))$ as
\begin{equation*}
\begin{split}
\cG^s f(x)&=\frac{1}{s^2(y)m'(y)}\left(\frac{s^2(y)}{s'(y)}f'(y)\right)'\\
&=\frac{1}{2}\sigma^2(x)\frac{d^2f}{dx^2} + \left(b(x)+\sigma^2(x)\frac{s'(x)}{s(x)}\right)\frac{df}{dx}\\
&=\frac{1}{2} \sigma^2(x) \frac{d^2f}{dx^2}+\left[b(x) +\frac{\sigma^2(x)\exp\left(-\int_w^x 2 \frac{b(z)}{\sigma^2(z)} \, dz\right)}
{\int_\ell^x \exp\left(-\int_w^y 2 \frac{b(z)}{\sigma^2(z)} \, dz\right) \, dy}
\right]
\frac{df}{dx}.
\end{split}
\end{equation*}
\end{proof}
\begin{remark} If the diffusion $(X_t)_{t\geq 0}$ is a strictly positive local martingale then one can see that on $I=(0,\infty)$ the following conditions hold
\begin{itemize}
\item The boundary points $0,\infty$ are inaccessible.
\item $\lim_{t\to \infty} X_t =0$, $\Pr^x$ - almost surely for all $x\in I$.
\end{itemize}
The last conditions holds because of the following argument: 

A strictly positive continuous local martingale $X$ is a supermartingale since $X$ is bounded below by $0$. Also, if $X_t^-:=\max(-X_t,0)$ then $X_t^-\equiv 0$ almost surely so
\[
\sup_{t>0} \E^x[X_t^-] = 0<\infty
\]
By Doob's first martingale convergence theorem we get that
\be\label{e:conv}
\Pr^x\{X_\infty:= \lim_{t\to \infty} X_t~ \text{exists and}~0\leq X_\infty<\infty\}=1
\ee
for all $x\in I$. Then since $\Pr^x(T_a\wedge T_b<\infty)=1$ for any $a\leq x\leq b$ we see that
\[
\liminf_{t\to \infty}X_t \leq a
\]
or
\[
\limsup_{t\to \infty}X_t \geq b.
\]
This fact combined with \eqref{e:conv} yields that
\[
\Pr^x\{X_\infty = 0\}=1
\]
for all $x\in I$.

Note that we can get diffusions that are strictly positive martingales using the following proposition.
\begin{proposition}
Suppose $(Y_t)_{t\geq 0}$ is the solution to the SDE

\begin{equation}\label{e:local_MG}
\begin{split}
dY_t &= \sigma(Y_t)Y_tdW_t,\\
Y_0 &= y>0.\\
\end{split}
\end{equation}
where $\sigma(x)>0$ for all $x\in (0,\infty)$ and $\sigma^{-2}$ is locally integrable on $(0,\infty)$. Then, solutions $Y$ to \eqref{e:local_MG} do not hit zero almost surely for any starting point $y\in (0,\infty)$ if and only if
\[
\int_0^K x^{-1}\sigma^{-2}(x)\,dx = \infty
\]
for some $K>0$.

\end{proposition}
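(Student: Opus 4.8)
The plan is to recognize the solution of \eqref{e:local_MG} as a regular one-dimensional diffusion and to decide whether it hits $0$ from Feller's boundary classification. First I would note that \eqref{e:local_MG} has zero drift and diffusion coefficient $\sigma^2(x)x^2$, so its generator is $\cG f(x)=\tfrac12\sigma^2(x)x^2 f''(x)$; local integrability of $\sigma^{-2}$ guarantees, via the Engelbert--Schmidt condition, that a weak solution exists and that the scale and speed data below are well defined, making $(Y_t)_{t\geq 0}$ a regular diffusion on $(0,\infty)$. Since there is no drift the process is already in natural scale, $s(x)=x$ (so $s'\equiv 1$), and the speed measure has density $m'(x)=2\sigma^{-2}(x)x^{-2}$.

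The statement ``$Y$ does not hit $0$ almost surely, for every starting point'' is precisely the assertion that the left endpoint $0$ is inaccessible, i.e. $\Pr^y\{T_0<\infty\}=0$ for all $y$, where $T_0:=\inf\{t\ge 0:Y_t=0\}$. To see that this is a genuine dichotomy I would argue that $h(y):=\Pr^y\{T_0<\infty\}$ is harmonic for the process: conditioning on the first exit from $(a,b)\subset(0,\infty)$, which occurs in finite time for a regular diffusion, and using the natural-scale exit probability $\Pr^y\{T_a<T_b\}=(b-y)/(b-a)$, gives the mean-value identity $h(y)=\frac{b-y}{b-a}h(a)+\frac{y-a}{b-a}h(b)$ for all $0<a<y<b$. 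Hence $h$ is affine on $(0,\infty)$, and being bounded in $[0,1]$ it must be constant, say $h\equiv\alpha$. Thus $Y$ reaches $0$ either from every point with probability one or from no point, so it remains only to decide which alternative holds.

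To decide the alternative I would invoke Feller's test at $0$: the endpoint is accessible if and only if $\Sigma(0):=\int_0^c\big(\int_\xi^c m'(\eta)\,d\eta\big)s'(\xi)\,d\xi<\infty$ for some (equivalently any) interior $c$. With $s'\equiv 1$ and $m'(\eta)=2\sigma^{-2}(\eta)\eta^{-2}$, a single application of Tonelli collapses the double integral: exchanging the order of integration gives $\Sigma(0)=\int_0^c 2\sigma^{-2}(\eta)\eta^{-2}\big(\int_0^\eta d\xi\big)\,d\eta=\int_0^c 2\,\eta^{-1}\sigma^{-2}(\eta)\,d\eta$. Therefore $0$ is accessible iff $\int_0^c x^{-1}\sigma^{-2}(x)\,dx<\infty$ and inaccessible iff this integral diverges. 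Because $\sigma^{-2}$ is locally integrable on $(0,\infty)$, the value of $\int_0^K x^{-1}\sigma^{-2}(x)\,dx$ is finite for one $K>0$ iff it is finite for all of them, so the criterion does not depend on $K$. Combined with the dichotomy of the previous paragraph this gives exactly the claim: $Y$ fails to hit $0$ a.s.\ from every starting point precisely when $\int_0^K x^{-1}\sigma^{-2}(x)\,dx=\infty$.

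The step I expect to require the most care is the inaccessible direction, $\int_0 x^{-1}\sigma^{-2}=\infty\Rightarrow\alpha=0$: a large expected exit time is not by itself conclusive, so here I would lean on Feller's test (or, equivalently, exhibit a function excessive for the killed process that blows up at $0$) rather than on a moment estimate. For the accessible direction one can argue self-containedly by computing, on the truncated interval $(0,c)$ with both ends absorbing, the expected exit time $w(y)=\int_0^c \tfrac{(y\wedge z)(c-y\vee z)}{c}\,m'(z)\,dz$; near $0$ this behaves like $\tfrac{c-y}{c}\int_0 2\,\eta^{-1}\sigma^{-2}(\eta)\,d\eta$, so convergence of the integral makes $w$ finite, whence $T_0\wedge T_c<\infty$ a.s.\ and, on the positive-probability event $\{T_0<T_c\}$, $T_0<\infty$; the dichotomy then upgrades $\alpha>0$ to $\alpha=1$. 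The only remaining points to keep in view are the standing regularity of the diffusion, used for the a.s.-finite exit from $(a,b)$, and the harmless restriction throughout to compactly supported $C^2$ functions.
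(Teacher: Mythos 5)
Your proposal is correct, but there is nothing in the paper to compare it against: the proposition appears inside a remark and the paper supplies no proof at all (the author states it as a known fact; it is essentially the classical criterion, in the style of Delbaen--Shirakawa, for when a driftless positive diffusion avoids the origin). On its own merits your argument is sound and matches the paper's conventions elsewhere (Remark \ref{r:sde}): zero drift puts the process in natural scale $s(x)=x$ with speed density $m'(x)=2\sigma^{-2}(x)x^{-2}$, and the Engelbert--Schmidt condition is exactly what the hypothesis ``$\sigma^{-2}$ locally integrable'' provides. Your Tonelli computation is right, and--importantly--you picked the correct Feller functional: after swapping the order of integration your $\Sigma(0)$ equals $\int_0^c\bigl(s(\eta)-s(0+)\bigr)m'(\eta)\,d\eta=\int_0^c 2\eta^{-1}\sigma^{-2}(\eta)\,d\eta$, which is the accessibility quantity of Feller's test (Karatzas--Shreve, Prop.\ 5.5.32), as sanity checks against Brownian motion, geometric Brownian motion, and the Feller branching diffusion confirm. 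The Green-function argument for the accessible direction also works, since $m'$ is integrable on compact subsets of $(0,\infty)$ by hypothesis, and local finiteness of $\int_K^{K'} x^{-1}\sigma^{-2}(x)\,dx$ justifies the ``for some $K$'' phrasing.

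One small overclaim: the harmonicity dichotomy shows only that $h(y)=\Pr^y\{T_0<\infty\}$ is a constant $\alpha$; it does not by itself ``upgrade $\alpha>0$ to $\alpha=1$.'' If you want the full zero--one statement, consider the bounded martingale
\begin{equation*}
M_t:=\Pr^y\{T_0<\infty \mid \cF_t\}=\ind\{T_0\le t\}+\alpha\,\ind\{T_0>t\},
\end{equation*}
whose almost sure limit $\ind\{T_0<\infty\}$ forces $\alpha\in\{0,1\}$. But this is cosmetic: the proposition only asserts that ``$Y$ does not hit zero almost surely'' fails in the accessible case, and for that $\alpha>0$ already suffices, so your proof of the stated equivalence is complete without it.
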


\end{remark}
\begin{remark}\label{r:sde}
Suppose the functions $\sigma, \mu\in C((\ell,r))$ satisfy
\begin{itemize}
\item $\sigma(x)>0$ for all $x\in (\ell, r)$.
\item $\frac{1}{\sigma^2(\cdot)}, \frac{b(\cdot)}{\sigma^2(\cdot)}$ are locally integrable on $(\ell,r)$.
\end{itemize}
The stochastic differential equation
\be
\begin{split}\label{e:SDE}
d Y_t &= \sigma(Y_t) dW_t + \mu(Y_t) dt\\
Y_0 &= y\in (\ell,r)
\end{split}
\ee
has a solution for each $y\in (\ell,r)$ that does not explode and is unique in law. Then $(Y_t)_{t\geq 0}$
is a regular diffusion with scale function density and speed measure density given by
\be\label{e_SDE_scale_speed_killing}
\begin{split}
m'(x)&=\frac{2}{\sigma^2(x)}\exp\left(\int^x \frac{2}{\sigma^2(y)}\mu(y)\,dy\right),\\
s'(x) &= \exp\left(-\int^x \frac{2}{\sigma^2(y)}\mu(y)\,dy\right).
\end{split}
\ee
\end{remark}

By Remark \ref{r:sde} and Theorem \ref{t:generator_diffusion} we have the following Corollary.

\begin{corollary}
Let $(X_t)_{t\geq 0}$ be the solution to the one dimensional stochastic differential equation
\begin{equation*}
\begin{split}
d X_t &= \sigma(X_t) dW_t + b(X_t) dt\\
X_0 &= x\in I
\end{split}
\end{equation*}
on $I=(\ell,\infty)$ with $\ell, \infty$ inaccessible boundary points and such that
\[
\Pr^x\left\{\lim_{t\rightarrow \infty} X_t = \ell\right\}=1
\]
 for all $x\in (\ell,\infty)$. Assume that 
 \begin{itemize}
\item $\sigma^2(x)>0$ for all $x\in I$.
\item $\frac{1}{\sigma^2(\cdot)}, \frac{b(\cdot)}{\sigma^2(\cdot)}$ are locally integrable on $I$.
\item $\zeta$ is an independent exponential with rate $\lambda$.
\end{itemize}
If we condition $(X_t)_{t\geq 0}$ on $\{X_{\zeta-}\in (a,\infty)\}$ for $a\in(\ell,\infty)$, kill the process at $\zeta$, and let $a\rightarrow \infty$ followed by $\lambda\downarrow 0$ we get a diffusion $(Z_t)_{t\geq 0}$ that can be represented as the solution to the SDE
\begin{equation*}
d Z_t = \left[b(Z_t)
+
\frac{\sigma^2(Z_t)\exp\left(-\int_w^{Z_t} 2 \frac{b(z)}{\sigma^2(z)} \, dz\right)}
{\int_\ell^{Z_t} \exp\left(-\int_w^y 2 \frac{b(z)}{\sigma^2(z)} \, dz\right) \, dy}
\right] + \sigma(Z_t)\,dW_t.
\end{equation*}
\end{corollary}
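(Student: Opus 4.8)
The plan is to obtain this Corollary as a direct application of Remark~\ref{r:sde} and Theorem~\ref{t:generator_diffusion}, the only real work being to check that the hypotheses transfer correctly and that the limiting $h$-transform can itself be written back as the solution of an SDE. First I would invoke Remark~\ref{r:sde} with $\mu=b$ to guarantee that the given SDE has a non-exploding solution, unique in law, and that $(X_t)_{t\geq 0}$ is a regular diffusion on $I=(\ell,\infty)$ with scale and speed densities as in \eqref{e_SDE_scale_speed_killing}; in particular the scale function is $s(x)=\int_\ell^x \exp\left(-\int_w^y 2\frac{b(z)}{\sigma^2(z)}\,dz\right)dy$ and the killing measure vanishes.

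Next I would verify that the standing hypotheses of Theorem~\ref{t:generator_diffusion} are met. The boundary points $\ell,\infty$ are inaccessible and $X_t\to\ell$ almost surely by assumption, the killing measure is $k\equiv 0$ because the SDE carries no killing term, and the generator acts on compactly supported $C^2$ functions as $\cG f=\tfrac12\sigma^2 f''+bf'$ with $\sigma^2>0$ and $\sigma,b$ continuous. With these in place, Theorem~\ref{t:generator_diffusion} applies verbatim: conditioning $(X_t)_{t\geq 0}$ on $\{X_{\zeta-}\in(a,\infty)\}$, killing at $\zeta$, and passing to the limit $a\to\infty$ followed by $\lambda\downarrow 0$ produces a diffusion $(Z_t)_{t\geq 0}$ whose generator acts on compactly supported $C^2$ functions as the operator $\cG^s$ of \eqref{e:generator}, that is $\cG^s f=\tfrac12\sigma^2 f''+\tilde b\,f'$ with drift $\tilde b(x)=b(x)+\sigma^2(x)\frac{s'(x)}{s(x)}$.

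It then remains to recognize this diffusion as the solution of the stated SDE, which I would do by applying Remark~\ref{r:sde} in the reverse direction, now reading off diffusion coefficient $\sigma$ and drift coefficient $\tilde b$. The step requiring care is checking the integrability hypotheses of Remark~\ref{r:sde} for the new coefficients: $\tilde b$ is continuous on $I$ since $b,\sigma$ are continuous and $s>0$, $s'$ continuous there; and $\tilde b/\sigma^2=b/\sigma^2+s'/s$ is locally integrable because $b/\sigma^2$ is locally integrable by hypothesis while $s'/s$ is continuous, hence bounded, on every compact subset of the open interval $(\ell,\infty)$. The apparent blow-up of $s'/s$ as $x\downarrow\ell$, which reflects the fact that the conditioned drift pushes the process away from $\ell$, is harmless, since local integrability is demanded only on compacts interior to $I$. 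Remark~\ref{r:sde} then furnishes a non-exploding solution, unique in law, of the target SDE whose generator coincides with $\cG^s$.

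Finally I would conclude by uniqueness: a regular diffusion on $I$ with null killing is determined by its scale and speed measures, so the solution of the target SDE and the limiting process $(Z_t)_{t\geq 0}$, sharing the generator $\cG^s$, agree in law, which is exactly the asserted representation. The main obstacle here is not any single hard estimate but keeping the two invocations of Remark~\ref{r:sde} straight and confirming that the singular behavior of the new drift at the left endpoint does not disturb the local-integrability bookkeeping.
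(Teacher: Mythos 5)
Your proposal takes exactly the paper's route: the paper derives this corollary with the single line ``By Remark \ref{r:sde} and Theorem \ref{t:generator_diffusion} we have the following Corollary,'' and your argument is precisely that citation with the hypothesis transfer and the local-integrability bookkeeping for the new drift written out explicitly. One caveat you inherit from the paper itself: the non-explosion claim in Remark \ref{r:sde} that you invoke for the target SDE is in tension with the paper's own later observation (in the population-dynamics example) that the conditioned process $(Z_t)_{t\geq 0}$ can explode in finite time, so the asserted SDE representation should properly be read as holding up to the explosion time of $Z$.
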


\subsection{Examples}\label{s:examples}

\subsubsection{Brownian motion with negative drift} We do this type of construction with Brownian motion that has a negative
drift $-\mu<0$
\begin{equation*}
dX_t = -\mu \,dt + dW_t, t\geq 0.
\end{equation*}
Note that in this case $\lim_{t\rightarrow \infty}X_t = -\infty$ $\Pr^x$-almost surely for all $x\in (-\infty, \infty)$ and $\ell=-\infty$. Since $b(x)=-\mu$ and $\sigma(x)=1$ the scale function will be given by

\begin{eqnarray*}
s(x)&=&\int_{-\infty}^x \exp\left(-\int_w^y 2 \frac{b(z)}{\sigma^2(z)} \, dz\right) \, dy\\
&=& \frac{1}{2\mu}e^{2\mu(y-w)} .
\end{eqnarray*}
As a result
\begin{eqnarray*}
\frac{s'(x)}{s(x)} &=& 2\mu
\end{eqnarray*}
and the drift $\tilde b$ of the conditioned limiting process $(Z_t)_{t\geq 0}$ is
\begin{eqnarray*}
\tilde b(x) &=&\left[b(x)+ \frac{\sigma^2(x)\exp\left(-\int_w^x 2 \frac{b(z)}{\sigma^2(z)} \, dz\right)} {\int_\ell^x \exp\left(-\int_w^y 2 \frac{b(z)}{\sigma^2(z)} \, dz\right) \, dy}\right] \\
&=& (-\mu + 2\mu)\\
&=& \mu.
\end{eqnarray*}
The limiting process $(Z_t)_{t\geq 0}$ is a  Brownian motion with positive drift $\mu$
\[
dZ_t =  \mu\, dt  + dW_t.
\]

\subsubsection{An example from population dynamics}
Next, let us look what happens to the diffusion
\begin{equation*}
dX_t = X_t(\mu-\kappa X_t)\,dt + \sigma X_t \,dW_t.
\end{equation*}
This SDE models the total population abundance of a species living in one patch. The intuitive meaning of the coefficients is the following
\begin{itemize}
\item $\mu$ is the intrinsic growth rate of the population in the absence of stochasticity,
\item $\kappa$ is the strength of intraspecific competition,
\item $\sigma^2$ is the infinitesimal variance parameter of the stochastic growth rate.
\end{itemize}
We consider the case when $\mu-\frac{\sigma^2}{2}<0$. When this condition is satisfied one knows that the SDE has a strong solution which does not explode, satisfies $X_t>0$ for all $t\geq0$ and goes asymptotically to zero
\[\lim_{t\rightarrow \infty}X_t=0
\]
$\Pr^x$-almost surely for all $x\in\R_+$. See for example \cite{EHS14}.
Using the construction above with $\ell=0$, $b(z)=\mu z-\kappa z^2$ and $\sigma(z)=\sigma z$ we get that the scale function is
\begin{eqnarray*}
s(x)&=&\int_0^x \exp\left(-\int_w^y 2 \frac{b(z)}{\sigma^2(z)} \, dz\right) \, dy\\
&=& \int_0^x \left(\frac{y}{w}\right)^{-\frac{2\mu}{\sigma^2}} \exp\left(\frac{2\kappa}{\sigma^2}(y-w)\right)\,dy.
\end{eqnarray*}
The logarithmic derivative of $s(x)$ will be given by
\begin{eqnarray*}
\frac{s'(x)}{s(x)} &=& \frac{x^{-\frac{2\mu}{\sigma^2}} \exp\left(\frac{2\kappa}{\sigma^2}x \right)}{\int_0^x y^{-\frac{2\mu}{\sigma^2}} \exp\left(\frac{2\kappa}{\sigma^2}y \right)\,dy}.
\end{eqnarray*}
As a result the drift of the conditioned diffusion $(Z_t)_{t\geq 0}$ is
\begin{eqnarray*}
\tilde b(x) &=&\left[b(x)+ \frac{\sigma^2(x)\exp\left(-\int_w^x 2 \frac{b(z)}{\sigma^2(z)} \, dz\right)} {\int_\ell^x \exp\left(-\int_w^y 2 \frac{b(z)}{\sigma^2(z)} \, dz\right) \, dy}\right] \\
&=& \mu x-\kappa x^2+\sigma^2x^2  \frac{x^{-\frac{2\mu}{\sigma^2}} \exp\left(\frac{2\kappa}{\sigma^2}x \right)}{\int_0^x y^{-\frac{2\mu}{\sigma^2}} \exp\left(\frac{2\kappa}{\sigma^2}y \right)\,dy}\\
&=& \mu x-\kappa x^2 + \frac{\sigma^2 x^{-\frac{2\mu}{\sigma^2}+2} \exp\left(\frac{2\kappa}{\sigma^2}x \right)}{\int_0^x y^{-\frac{2\mu}{\sigma^2}} \exp\left(\frac{2\kappa}{\sigma^2}y \right)\,dy}.
\end{eqnarray*}
Let us study the asymptotics as $x\rightarrow \infty$ of $\tilde b(x)$. First look at
\[
\int_0^x y^{-\frac{2\mu}{\sigma^2}}\exp\left(\frac{2\kappa}{\sigma^2}(y-x)\right)\,dy.
\]
Do the change of variables
\[
z=-\frac{2\kappa}{\sigma^2}(y-x).
\]
As a result
\[
y=x-\frac{\sigma^2}{2\kappa}z
\]
and
\[
dy = -\frac{\sigma^2}{2\kappa}\,dz
\]
so we can write
\begin{equation*}
\begin{split}
\int_0^x y^{-\frac{2\mu}{\sigma^2}}\exp\left(\frac{2\kappa}{\sigma^2}(y-x)\right)\,dy &=\int_0^{\frac{2\kappa x}{\sigma^2}} \left(x-\frac{\sigma^2z}{2\kappa}\right)^{-\frac{2\mu}{\sigma^2}}\exp(-z)\frac{\sigma^2}{2\kappa}\,dz.\\
\end{split}
\end{equation*}
Thus
\begin{equation*}
\begin{split}
\frac{\sigma^2 x^{-\frac{2\mu}{\sigma^2}+2} \exp\left(\frac{2\kappa}{\sigma^2}x \right)}{\int_0^x y^{-\frac{2\mu}{\sigma^2}} \exp\left(\frac{2\kappa}{\sigma^2}y \right)\,dy}&=\frac{\sigma^2 x^2}{\int_0^{\frac{2\kappa x}{\sigma^2}} \left(1-\frac{\sigma^2}{2\kappa}\frac{z}{x}\right)^{-\frac{2\mu}{\sigma^2}}\exp(-z)\frac{\sigma^2}{2\kappa}\,dz}\\
&\approx \frac{2\kappa x^2}{\int_0^\infty e^{-z}\,dz} \\
&= 2\kappa x^2
\end{split}
\end{equation*}
as $x\rightarrow\infty$ which shows that
\[
\tilde b(x) \approx \mu x-\kappa x^2 + 2\kappa x^2 = \mu x+\kappa x^2
\]
as $x\rightarrow \infty$.

This implies that the limiting diffusion $(Z_t)_{t\geq 0}$ looks, for large values of $(Z_t)_{t\geq 0}$, like

\[
d\bar Y_t = (\mu \bar Y_t+\kappa \bar Y_t^2)\,dt+\sigma \bar Y_t \,dW_t.
\]
\begin{remark}\label{r: bar Y}
One can show that the process $(\bar Y_t)_{t\geq 0}$ explodes in finite time.
\end{remark}
Let us next study the asymptotics as $x\rightarrow 0$ of $\tilde b(x)$.

\begin{eqnarray*}
\tilde b(x)&=& \mu x-\kappa x^2 + \frac{\sigma^2 x^{-\frac{2\mu}{\sigma^2}+2} \exp\left(\frac{2\kappa}{\sigma^2}x \right)}{\int_0^x y^{-\frac{2\mu}{\sigma^2}} \exp\left(\frac{2\kappa}{\sigma^2}y \right)\,dy}\\
&\approx& \mu x-\kappa x^2 + \frac{\sigma^2 x^{-\frac{2\mu}{\sigma^2}+2} }{\int_0^x y^{-\frac{2\mu}{\sigma^2}} \,dy}\\
&=& \mu x-\kappa x^2 + \frac{\sigma^2 x^{-\frac{2\mu}{\sigma^2}+2} }{\frac{x^{-\frac{2\mu}{\sigma^2}+1}}{-\frac{2\mu}{\sigma^2}+1}}\\
&=& \mu x-\kappa x^2 + x(-2\mu+\sigma^2)
\end{eqnarray*}

The conditioned diffusion $(Z_t)_{t\geq 0}$ looks, for small values of $(Z_t)_{t\geq 0}$, like

\[
d\hat Z_t = ((\sigma^2-\mu) ]\hat Z_t-\kappa ]\hat Z_t^2)\,dt+\sigma \hat Z_t \,dW_t.
\]
\begin{remark}
Note that
\[
(\sigma^2-\mu)-\frac{\sigma^2}{2} = \frac{\sigma^2}{2}-\mu>0
\]
so that the process $(\hat Z_t)_{t\geq 0}$ is not going to go to zero. This together with Remark \ref{r: bar Y} show that the process $(Z_t)_{t\geq 0}$ explodes in finite time.
\end{remark}

\section*{Appendix A}
We follow \cite{Bor02} for some basic facts about diffusions.
In this paper we only consider \textit{regular diffusions}; that is, diffusions such that for all $x,y\in I$
\[
\Pr^x\{T_y<\infty\}>0
\]
where $T_y:=\inf\{t:X_t=y\}$ -- any state $y$ can be reached in finite time with positive probability from any state $x$.

The diffusion $(X_t)_{t\geq 0}$ determines  three basic Borel measures on the state space $I$:
a {\em scale measure} $s$, a {\em speed measure} $m$,  and a {\em killing measure} $k$ (see \cite{IM96}). From now on we will consider that there is no killing, that is $k\equiv 0$.
It turns out to be convenient not to specify these objects absolutely but only up to a constant.
If $(s^*, m^*)$ and $(s^{**}, m^{**})$ are two pairs of these objects, then
$s^{**} = c s^*$ for some strictly positive constant $c$, in which case $m^{**} = c^{-1} m^*$.  The scale measure $s$ is  diffuse.  Both the scale measure and the speed measure
have full support and assign
finite mass to intervals of the form $(y,z)$, where $\ell < y < z < r$.
If $(P^X_t)_{t \ge 0}$ is the transition semigroup of $(X_t)_{t\geq 0}$, then there exists a density $p$ that is
strictly positive, jointly continuous in all variables, and symmetric such that
\[
P^X_t(x,A)=\int_A p(t;x,y) \, m(dy), \quad \text{$x\in I$, $t>0$, and $A\in \mathcal{B}(I)$}.
\]

With a standard abuse of notation, as well as using $s$
to denote the scale measure we write $s$ for any {\em scale function} such that
\[
s(z) - s(y) =\int_y^z \, s(dx).
\]
For $\alpha>0$ the \textit{Green function} $r_\alpha(x,y)$ is given by
\begin{equation*}
r_\alpha(x,y):= \int_0^\infty e^{-\alpha t} p(t;x,y)\,dt,
 \end{equation*}
where $p(t;x,y)$ is the transition density with respect to the speed measure $m$. Set
\[
D^+_{s}f(x) = \lim_{\eta\downarrow x}\frac{f(\eta)-f(x)}{s(\eta)-s(x)},
\]
and
\[
D^-_{s}f(x) = \lim_{\eta\uparrow x}\frac{f(\eta)-f(x)}{s(\eta)-s(x)}
\]
for a function $f:(a,b) \to \bR$.

The diffusion $(X_t)_{t\geq 0}$ determines and in turn is determined by its infinitesimal generator.
The infinitesimal generator is specified by the scale, speed and killing measures
and by {\em boundary conditions} on functions in the domain.

\begin{definition}\label{d:generator}
The \textit{(weak) infinitesimal generator} of $(X_t)_{t\geq 0}$ is the operator $\cG$ defined by
\[
\cG^{}f:=\lim_{t\downarrow 0}\frac{P_t f-f}{t}
\]
applied to $f\in \cC_b(I)$ for which the limit exists pointwise, is in $\cC_b(I)$, and
\[
\sup_{t>0}\left\|\frac{P_t f-f}{t}\right\|<\infty.
\]
We denote by $\cD(\cG^{})$ the set of such functions. One can show that $\cD(\cG)$ is characterized by saying that $f\in \cC_b(I)$
belongs to $\cD(\cG)$ if $D_{s}^-f$ and $D_{s}^+f$ exist
and there exists a function $g\in \cC_b(I)$ such that for all $\ell<a<b<r$,
\begin{itemize}
  \item [(a)]
  \[
  \int_{[a,b)}g(x) \, m(dx) = D_{s}^- f(b) - D_{s}^- f(a) - \int_{[a,b)}f(x) \, k(dx).
  \]
   \item [(b)]
  \[
  \int_{(a,b]}g(x) \, m(dx) = D_{s}^+ f(b) - D_{s}^+ f(a) - \int_{(a,b]}f(x) \, k(dx).
  \]

 \end{itemize}
 together with boundary conditions. See \cite{Bor02} for more details.
 \end{definition}

\section*{Appendix B}
Let $(X_t, \Omega, \mathcal{F}, \Pr^x, \theta_t,(\mathcal{F}_t))$ be a strong Markov process with state space a locally compact metric space $E$. We let $\Omega$ be the space of functions $\omega:\R_+\mapsto E_\partial$ which are right continuous, admit an almost surely finite terminal time $\zeta<\infty$, and have left limits. We can define $(X_t)_{t\geq 0}$ on this probability space by $X_t(\omega)=\omega(t)$, $\omega\in \Omega$. This spaces comes equipped with the shift operator $\theta_t$:
\[
\theta_t\omega(s) = \omega(s+t).
\]
We let $\mathcal{F}_t^0$ be the natural filtration on
$\Omega$: $\mathcal{F}_t^0=\sigma\{X_s,0\leq s\leq t\}$.
Set $\mathcal{F}^0=\bigcup_{t}\mathcal{F}_t^0$ and for an initial law $\mu$ let $\mathcal{F}^\mu$ denote the completion of $\mathcal{F}^0$ relative to $\Pr^\mu$ and let $\mathcal{N}^\mu$ denote the $\Pr^\mu$-null sets in $\mathcal{F}^\mu$.

Define then
\begin{itemize}
\item $\mathcal{F}:= \bigcap \left\{\mathcal{F}^\mu: \mu ~\text{is an initial law on}~E\right\}$.
\item $\mathcal{N} :=  \bigcap \left\{\mathcal{N}^\mu: \mu ~\text{is an initial law on}~E\right\}$.
\item $\mathcal{F}_t^\mu:= \mathcal{F}_t\vee \mathcal{N}^\mu$.
\item $\mathcal{F}_t := \bigcap \left\{\mathcal{F}_t^\mu: \mu~\text{is an initial law on}~E\right\}$.
\end{itemize}
The process $X$ will be described by the probability family $(\Pr^x)_{x\in E}$ for which
\[
\Pr^x\{X_0=x\}=1
\]
for all $x\in E$.
We assumed $X$ has an almost surely finite terminal time $\zeta<\infty$. This means that $\zeta$ has the property
\begin{equation*}
\zeta = s + \zeta \circ \theta_s,
\end{equation*}
on the event $\{\zeta > s\}$.  In other words, if the process has not died by time $s$, then the decision about when to die comes from looking at the future piece of path as though we are starting at time zero.

We call a function $f:E\rightarrow \R_+\cup\{\infty\}$ \textit{excessive} if the following two conditions are satisfied
\begin{itemize}
\item[(1)]
\[
 \E^x f(X_t)\leq f(x)
\]
for all $t\geq 0$ and $x\in E$.
\item[(2)]
\[
\lim_{t\downarrow 0} \E^x f(X_t) = f(x)
\]
for all $x\in E$.
\end{itemize}
\begin{theorem}\label{t:conditioning}
Let $\bar  f:E\rightarrow \R_+$ be excessive. The operators $(P_t)_{t \geq 0}$ defined as
\[
P_t g(x) = \frac{1}{\bar f(x)} \Pr^x[1\{\zeta>t\}g(X_t)\bar f(X_t)]
\]
define a submarkovian semigroup for a family of probability measures $(\Qr^x)_{x\in I}$ on $\Omega$. The process $X$ is strong (sub)Markov under$(\Qr^x)_{x\in I}$.
\end{theorem}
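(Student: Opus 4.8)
The plan is to treat the three assertions in turn: that $(P_t)_{t\ge 0}$ is submarkovian, that it is a semigroup, and that it is realized by a strong Markov family $(\Qr^x)$. Throughout I would extend $\bar f$ to the cemetery by setting $\bar f(\partial)=0$, so that $\bar f(X_t)\ind\{\zeta>t\}=\bar f(X_t)$ and $P_t g(x)=\bar f(x)^{-1}\E^x[g(X_t)\bar f(X_t)]$. The submarkovian property is then immediate from condition (1) in the definition of an excessive function: taking $g\equiv 1$ gives $P_t 1(x)=\bar f(x)^{-1}\E^x[\bar f(X_t)]\le 1$, and positivity of $P_t$ is clear. For the semigroup identity $P_{s+t}=P_sP_t$ I would expand $(P_tg)(X_s)\bar f(X_s)=\E^{X_s}[\ind\{\zeta>t\}g(X_t)\bar f(X_t)]$ and apply the Markov property at time $s$; composing with $\theta_s$ replaces $X_t$ by $X_{s+t}$ and $\{\zeta>t\}$ by $\{\zeta\circ\theta_s>t\}$, and on $\{\zeta>s\}$ the terminal-time identity $\zeta=s+\zeta\circ\theta_s$ turns the latter into $\{\zeta>s+t\}$. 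Since $\{\zeta>s+t\}\subset\{\zeta>s\}$ the outer indicator is absorbed and $P_s(P_tg)(x)=P_{s+t}g(x)$ results.

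The second step is to produce the measures. I would set
\[
M_t:=\ind\{\zeta>t\}\,\frac{\bar f(X_t)}{\bar f(X_0)},
\]
and record two structural facts. First, excessiveness together with the Markov property shows that $(M_t)_{t\ge 0}$ is a nonnegative $(\ft,\Pr^x)$-supermartingale for each $x$, with $\E^x[M_t]\le 1$ and $\lim_{t\downarrow 0}\E^x[M_t]=1$ by condition (2). Second, the same computation as in the semigroup step yields the multiplicative cocycle identity $M_{s+t}=M_s\,(M_t\circ\theta_s)$, so that $M$ is a (supermartingale) multiplicative functional of $X$. Defining $\Qr^x$ on $\ft$ by $d\Qr^x|_{\ft}=M_t\,d\Pr^x|_{\ft}$ gives, for each $t$, a sub-probability measure; for $s<t$ the restriction to $\fs$ has $\Pr^x$-density $\E^x[M_t\mid\fs]\le M_s$, so the family decreases, the deficit $M_s-\E^x[M_t\mid\fs]$ accounting for the mass lost to extra killing in $(s,t]$. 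A Kolmogorov--Daniell extension for submarkovian kernels on the locally compact space $E_\partial$ then realizes $(P_t)$ as the transition semigroup of a Markov family $(\Qr^x)$ on the canonical path space $\Omega$, the mass defect of the supermartingale being carried by an additional killing that sends the path to $\partial$; after this adjustment each $\Qr^x$ is a genuine probability measure.

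The final and most delicate step is the strong Markov property under $\Qr^x$. Here I would rely on the multiplicative-functional structure: for a stopping time $T$ one first upgrades the change-of-measure identity to $d\Qr^x|_{\mathcal F_T}=M_T\,d\Pr^x|_{\mathcal F_T}$ by optional sampling of the supermartingale $M$, and then uses the cocycle identity $M_{T+t}=M_T\,(M_t\circ\theta_T)$. For $\Lambda\in\mathcal F_T$ and bounded measurable $H$ depending on the path up to time $t$, writing the $\Qr^x$-expectation as an $M$-weighted $\Pr^x$-expectation, inserting the cocycle factorization, and applying the strong Markov property of $X$ under $\Pr^x$ rebuilds the density of $\Qr^{X_T}$ and gives
\[
\E_{\Qr^x}[\ind_\Lambda\,(H\circ\theta_T)]=\E_{\Qr^x}\!\left[\ind_\Lambda\,\E_{\Qr^{X_T}}[H]\right].
\]
I expect this to be the main obstacle. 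The routine algebra of the first two steps contrasts with the care needed here: justifying optional sampling and the identity $d\Qr^x|_{\mathcal F_T}=M_T\,d\Pr^x|_{\mathcal F_T}$ at stopping times, checking that all events are measurable with respect to the completed $\sigma$-algebras $\mathcal F_T$ constructed in Appendix B, and bookkeeping the cemetery point $\partial$ so that the distinction between the submarkovian semigroup and the probability measures $\Qr^x$ stays consistent throughout.
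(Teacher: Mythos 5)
Your opening computations are correct: positivity and $P_t 1\le 1$ follow from condition (1) of excessiveness (with $\bar f(\partial)=0$), the semigroup identity follows from the Markov property together with the terminal-time identity $\zeta=s+\zeta\circ\theta_s$, and $M_t=\ind\{\zeta>t\}\bar f(X_t)/\bar f(X_0)$ is indeed a supermartingale multiplicative functional. But these are the routine parts; the paper's entire proof is a citation of Theorem 11.9, p.~325 of \cite{CW05}, which is precisely the statement that the $h$-transform of a strong Markov process by an excessive function admits a realization as a right-continuous strong Markov process with left limits up to (possibly not at) its death time. The two steps where your outline is genuinely gapped are exactly the points that citation is doing the work for. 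The first is the construction of $(\Qr^x)$ on $\Omega$: a Kolmogorov--Daniell extension applied to the Markovianized kernels on $E_\partial$ yields a measure on the product space $E_\partial^{[0,\infty)}$, not on the canonical space $\Omega$ of right-continuous paths with left limits and a.s.\ finite lifetime on which the theorem asserts the $\Qr^x$ live. Passing to $\Omega$ requires proving path regularity of the $h$-process (right continuity under $\Qr^x$, left limits on $[0,\zeta)$), which is the substantive content of Chung--Walsh's Chapter 11 and cannot be absorbed into the phrase ``after this adjustment each $\Qr^x$ is a genuine probability measure.''

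The second gap is in your strong Markov step, which rests on the identity $d\Qr^x|_{\mathcal F_T}=M_T\,d\Pr^x|_{\mathcal F_T}$ ``by optional sampling.'' This identity is false in general, and you yourself recorded the reason at fixed times: $M$ is typically a strict supermartingale, so the restriction of $M_t\,d\Pr^x$ to $\fs$ has density $\E^x[M_t\mid\fs]$, which can be strictly smaller than $M_s$; there is no global absolute continuity of $\Qr^x$ with respect to $\Pr^x$ on $\ft$, and optional sampling yields only the inequality $\E^x[M_T]\le 1$, never an equality of measures. The correct relation is the pre-death one, $\Qr^x(\Lambda\cap\{t<\zeta\})=\E^x[M_t;\Lambda]$ for $\Lambda\in\mathcal F^0_t$, and its extension to stopping times --- from which the strong Markov property under $\Qr^x$ does follow via the cocycle $M_{T+t}=M_T\,(M_t\circ\theta_T)$ --- is itself part of the $h$-path machinery rather than a consequence of martingale optional sampling. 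So your outline is internally inconsistent at its most delicate step: either repair it by working throughout with the trace $\sigma$-algebras on the events $\{t<\zeta\}$ and proving the regularity statements directly, or do what the paper does and invoke \cite{CW05}.
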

\begin{proof}
Since $\bar f$ is excessive we can use Theorem 11.9 from page 325 in \cite{CW05} to say that under $(\Qr^x)_{x\in I}$ the process $X$ is a right-continuous killed strong Markov process which has left limits except possibly at its death time.
\end{proof}
The next result shows that the process $X$ conditioned to be in a set $A\subset E$ right before the terminal time $\zeta$ is strong Markov.
\begin{corollary}\label{t:Mar}
Assume that $\zeta$ is an independent exponential with rate $\lambda>0$ and that $A\subset E$ is such that
\[
\Pr^x\{X_{\zeta-}\in A\}>0
\]
for all $x\in E$. Then under the probability family defined by
\begin{equation}\label{e:Qx}
\Qr^x(B) = \Pr^x\{B~|~X_{\zeta-}\in A\}
\end{equation}
$X$ is a strong Markov process with transition semigroup
\begin{equation}\label{e:kernel}
\bar P_t g(x) := \frac{1}{\Pr^x\{X_{\zeta-} \in A\}}
\E^x[g(X_t) \ind\{\zeta > t\} \Pr^{X_t}\{X_{\zeta-} \in A\}].
\end{equation}

\end{corollary}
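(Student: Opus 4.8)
The plan is to exhibit the conditioned, killed process as the Doob $h$-transform produced by Theorem \ref{t:conditioning}, and then to check that the two descriptions define the same law.

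First I would introduce $\bar f(x) := \Pr^x\{X_{\zeta-}\in A\}$, which is strictly positive by hypothesis and bounded by $1$, and verify that it is excessive for the killed process. Because $\zeta$ is an independent exponential, $X$ killed at $\zeta$ has transition semigroup $e^{-\lambda t}P_t$, so $\E^x \bar f(X_t) = e^{-\lambda t}P_t\bar f(x)$ with the convention $\bar f(\partial)=0$. Writing $R_\lambda g := \int_0^\infty e^{-\lambda t}P_t g\,dt$ and using that the independent exponential $\zeta$ almost surely avoids the (at most countably many) discontinuities of the path, so that $X_{\zeta-}=X_\zeta$, one gets $\bar f = \lambda R_\lambda \ind_A$. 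A one-line resolvent computation, $e^{-\lambda t}P_t R_\lambda g = \int_t^\infty e^{-\lambda u}P_u g\,du$, then gives both $e^{-\lambda t}P_t\bar f\le\bar f$ and $e^{-\lambda t}P_t\bar f\to\bar f$ as $t\downarrow 0$, i.e. conditions (1)--(2) of the definition of excessive. Theorem \ref{t:conditioning} therefore applies with this $\bar f$: the operators in \eqref{e:kernel} (which coincide with those of the theorem, since $\bar f(\cdot)=\Pr^\cdot\{X_{\zeta-}\in A\}$) form a submarkovian semigroup $(\bar P_t)$, realized by a strong Markov family $(\tilde\Qr^x)$.

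Next I would prove the identity on which everything rests: on $\{\zeta>t\}$,
\[
\Pr^x\{X_{\zeta-}\in A\mid\mathcal F_t\}=\bar f(X_t).
\]
This is a consequence of the memorylessness of $\zeta$ and the Markov property: conditionally on $\mathcal F_t$ and $\{\zeta>t\}$ the residual lifetime $\zeta-t$ is again exponential with rate $\lambda$ and independent of $\mathcal F_t$, so integrating the Markov property against its law gives $\lambda R_\lambda\ind_A(X_t)=\bar f(X_t)$. Using this identity I would then verify directly that under $\Qr^x(\cdot)=\Pr^x\{\cdot\mid X_{\zeta-}\in A\}$ the process $X$ is Markov with transition semigroup $\bar P_t$. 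Indeed, for $B\in\mathcal F_t$, bounded $g$ on $E$ with $g(\partial)=0$, and $s\ge 0$, conditioning first on $\mathcal F_{t+s}$ and then on $\mathcal F_t$ and applying the identity twice yields
\[
\E^x\!\big[\ind_B\,g(X_{t+s})\ind\{X_{\zeta-}\in A\}\big]=\E^x\!\big[\ind_B\ind\{\zeta>t\}\,\bar f(X_t)\,\bar P_s g(X_t)\big]=\E^x\!\big[\ind_B\,\bar P_s g(X_t)\ind\{X_{\zeta-}\in A\}\big],
\]
with $\bar P_s g(\partial):=0$; dividing by $\bar f(x)$ this is exactly $\E_{\Qr^x}[\ind_B g(X_{t+s})]=\E_{\Qr^x}[\ind_B\bar P_s g(X_t)]$.

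Consequently $\Qr^x$ and $\tilde\Qr^x$ are Markov with the same transition semigroup and the same starting point, hence have identical finite-dimensional distributions and agree on $\mathcal F^0$; since $\tilde\Qr^x$ is strong Markov by Theorem \ref{t:conditioning}, so is $\Qr^x$, which is the assertion. The hard part is this identification, and inside it the bookkeeping at the terminal time: the global event $\{X_{\zeta-}\in A\}$ has to be localized to $\mathcal F_t$ through the displayed identity, and one must make sure that matching the merely submarkovian semigroup $\bar P_t$ (whose defect $1-\bar P_t 1$ encodes the killing at $\zeta$) truly determines the whole law on $\Omega$, including the law of the death time. This is why I would conclude through equality of finite-dimensional distributions of two Markov processes rather than trying to describe the killing mechanism of the $h$-transform on $\{\zeta\le t\}$ by hand.
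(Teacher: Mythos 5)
Your proof is correct, and its backbone coincides with the paper's: both hinge on recognizing $\bar f(x)=\Pr^x\{X_{\zeta-}\in A\}$ as excessive for the process killed at $\zeta$ and then invoking Theorem \ref{t:conditioning}. The differences lie in what is proved versus cited. For excessiveness, the paper writes $\bar f = U_\lambda\ind_A$ (modulo a dropped factor of $\lambda$, which your $\bar f=\lambda R_\lambda\ind_A$ gets right) and cites Proposition 2, p.\ 46 of \cite{CW05} for $\lambda$-excessivity, whereas you verify it by the one-line resolvent computation $e^{-\lambda t}P_t R_\lambda g=\int_t^\infty e^{-\lambda u}P_u g\,du$ --- same content, self-contained. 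More substantively, the paper ends with ``the result now follows by applying Theorem \ref{t:conditioning}''; but that theorem, as stated, only produces \emph{some} strong Markov family $(\tilde\Qr^x)$ with the semigroup \eqref{e:kernel} --- it does not by itself assert that this family equals the conditional family \eqref{e:Qx}. You supply exactly this missing identification: the localization identity $\Pr^x\{X_{\zeta-}\in A\mid\mathcal F_t\}=\bar f(X_t)$ on $\{\zeta>t\}$ (which is where the memorylessness of $\zeta$ actually enters, and which also justifies $X_{\zeta-}=X_\zeta$ a.s.\ since the independent diffuse time $\zeta$ avoids the countably many path discontinuities), the direct verification that $\Qr^x$ is Markov with semigroup $(\bar P_t)$, and the observation that a submarkovian semigroup together with an absorbing cemetery determines the law on $\Omega$, death time included, through finite-dimensional distributions, so that the strong Markov property transfers from $\tilde\Qr^x$ to $\Qr^x$. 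What the paper's route buys is brevity, by outsourcing the identification to the conditioning theory surrounding Theorem 11.9 of \cite{CW05}; what your route buys is that the corollary's actual claim --- that the \emph{conditioned} law, not merely some $h$-transform with the right semigroup, is strong Markov with semigroup \eqref{e:kernel} --- is established explicitly rather than implicitly.
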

\begin{proof}
Let
\[
\bar f(x):=\Pr^x\{X_{\zeta-}\in A\}
\]

and note that
\[
\Pr^x\{X_{\zeta-}\in A\} = U_\lambda\ind_A(x) := \int_0^\infty e^{-\lambda t}\Pr^x\ind_A(X_t)\,dt
\]
where $U_\lambda$ is the $\lambda$-resolvent of $X$. Since $\ind_A$ is a bounded positive function we can apply proposition 2 from page 46 of \cite{CW05} to conclude that $\bar f$ is $\lambda$-excessive for $X$. As a result $\bar f$ is excessive for $X$ killed at the terminal time $\zeta$. The result now follows by applying Theorem \ref{t:conditioning}.
\end{proof}
The following result tells us how the characteristics of a diffusion change under an $h$-transform.
\begin{theorem}
\label{t:h_diffusion_chars}
Let $(X_t)_{t\geq 0}$ be a regular, transient diffusion living on $I$ with null killing measure,
speed measure $m$ and scale function $s$.
Suppose that $h$ is a strictly positive excessive function such that $h(x_0) = 1$ for some $x_0$ in the state space and that the boundary points of $I$ are inaccessible.  The Doob $h$-transform of $(X_t)_{t\geq 0}$ is
a regular diffusion $(X_t^h)_{t\geq 0}$ with the following characteristics:
 \begin{itemize}
 \item Scale measure
  \begin{equation}\label{e:h_scale}
 s^h(dy) = h^{-2}(y) \, s(dy).
\end{equation}
\item Speed measure
 \begin{equation}\label{e:h_speed}
 m^h(dy) = h^2(y) \, m(dy).
 \end{equation}
\item If $m'\in C(I)$ and $h, s'\in C^1(I)$ the generator of $(X_t^h)_{t\geq 0}$ acts on functions  with compact support in  $C^2((\ell,\infty))$ as
\[
 \cG^s f(x)=\frac{1}{h^2(y)m'(y)}\left(\frac{h^2(y)}{h'(y)}f'(y)\right)'.
\]
\end{itemize}
\end{theorem}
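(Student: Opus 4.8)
The plan is to establish each of the three characteristics of $(X_t^h)_{t\geq 0}$ separately, using the three standard ways a regular diffusion is pinned down: the scale measure through hitting probabilities, the speed measure through the symmetric transition density of Appendix A, and the generator through its canonical second-order form. The $h$-transform itself is well defined by Theorem \ref{t:conditioning} applied with $\bar f = h$, and its semigroup is $P_t^h f = \frac{1}{h}P_t(hf)$; equivalently, writing the original kernel as $P_t(x,dy)=p(t;x,y)\,m(dy)$ with $p$ the strictly positive, jointly continuous, symmetric density from Appendix A, the transformed kernel is $P_t^h(x,dy)=\frac{h(y)}{h(x)}p(t;x,y)\,m(dy)$. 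I write $\Pr^{h,x}$ for the law of $X^h$ started at $x$.

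For the speed measure I would simply rearrange this kernel. Writing
\[
P_t^h(x,dy) = \frac{p(t;x,y)}{h(x)h(y)}\,\big(h^2(y)\,m(dy)\big),
\]
the function $p^h(t;x,y):=p(t;x,y)/(h(x)h(y))$ is strictly positive, jointly continuous, and symmetric in $(x,y)$. By the characterization of the speed measure in Appendix A as the reference measure (unique up to the multiplicative-constant freedom noted there) against which the transition density is symmetric, $p^h$ is the transition density of $(X_t^h)_{t\geq 0}$ and the speed measure is $m^h(dy)=h^2(y)\,m(dy)$, which is \eqref{e:h_speed}.

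For the scale measure I would use hitting probabilities. Fix $\ell<a<x<b$. In the harmonic case relevant to the application (where $h=s$ and $\cG s=0$) the process $h(X_{\cdot\wedge T_a\wedge T_b})$ is a bounded martingale, so the Doob reweighting for events in $\mathcal F_{T_a\wedge T_b}$ together with $X_{T_a\wedge T_b}=b$ on $\{T_b<T_a\}$ gives
\[
\Pr^{h,x}\{T_b<T_a\}=\frac{1}{h(x)}\,\E^x\!\big[h(X_{T_a\wedge T_b})\,\ind\{T_b<T_a\}\big]=\frac{h(b)}{h(x)}\cdot\frac{s(x)-s(a)}{s(b)-s(a)}.
\]
The scale function $s^h$ of $(X_t^h)_{t\geq 0}$ is in turn characterized by $\Pr^{h,x}\{T_b<T_a\}=(s^h(x)-s^h(a))/(s^h(b)-s^h(a))$. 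A short computation shows that $s^h(dy)=h^{-2}(y)\,s(dy)$ reproduces exactly the right-hand side above for every admissible triple $a<x<b$; since the scale function is determined only up to an affine transformation, this yields \eqref{e:h_scale}. As an equivalent check, one verifies directly that $s^h=\int^{\cdot}h^{-2}\,ds$ is $\cG^h$-harmonic, $\cG^h s^h=\frac{1}{h}\cG(h s^h)=0$, and strictly monotone.

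The generator formula is then immediate from the first two parts: a regular diffusion with no killing, speed $m^h$ and scale $s^h$ has generator $\frac{d}{dm^h}\frac{d}{ds^h}$, which under the stated smoothness reads
\[
\cG^h f(x)=\frac{1}{(m^h)'(x)}\left(\frac{f'(x)}{(s^h)'(x)}\right)'=\frac{1}{h^2(x)m'(x)}\left(\frac{h^2(x)}{s'(x)}f'(x)\right)',
\]
so that the denominator inside the bracket of the stated formula should read $s'$ rather than $h'$ (the two coincide in the application, where $h=s$); expanding $\cG^h f=\frac{1}{h}\cG(hf)$ directly recovers the same first- and second-order coefficients as a cross-check. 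The step requiring the most care, and the genuine obstacle, is the passage from the hypothesis that $h$ is merely \emph{excessive} to these clean formulas: the identity $\frac{1}{h}\cG(hf)=\frac{1}{h^2 m'}\left(\frac{h^2}{s'}f'\right)'+\frac{\cG h}{h}f$ shows that a strictly excessive (non-harmonic) $h$ forces a killing term $-(\cG h/h)f\geq 0$, so that \eqref{e:h_scale} and \eqref{e:h_speed} persist but the generator is no longer a pure second-order operator. This is exactly why the optional-stopping argument above is run under harmonicity; the application to Theorem \ref{t:generator_diffusion} uses $h=s$ with $\cG s=0$, for which no killing appears and all three conclusions hold as stated.
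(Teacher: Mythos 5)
Your proposal cannot be matched step-for-step against the paper's own argument for a simple reason: the paper does not prove Theorem \ref{t:h_diffusion_chars} at all --- its ``proof'' is a one-line deferral to the reference [EH15], with the remark that the result is folklore. Your blind reconstruction is therefore a genuinely different (indeed, the only) route on offer, and it is essentially sound: you identify $m^h(dy)=h^2(y)\,m(dy)$ from the symmetry of $p^h(t;x,y)=p(t;x,y)/(h(x)h(y))$ (the uniqueness-up-to-constant of a symmetrizing reference measure, which Appendix A states only in the existence direction, follows in one line from strict positivity and continuity of $p$); you pin down the scale via optional stopping of the bounded martingale $h(X_{\cdot\wedge T_a\wedge T_b})$, using that $T_a\wedge T_b<\infty$ almost surely for a regular diffusion; and you then read off the generator as $\frac{d}{dm^h}\frac{d}{ds^h}$. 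Two of your side observations check out against the body of the paper and are worth recording. First, the denominator $h'$ in the stated generator formula must indeed be $s'$: the formula is invoked in the proof of Theorem \ref{t:generator_diffusion} precisely in the form $\frac{1}{s^2m'}\bigl(\frac{s^2}{s'}f'\bigr)'$, i.e.\ with $h=s$, where the two coincide; for a general harmonic $h=\alpha+\beta s$ one has $h'=\beta s'$, so the version with $h'$ is off by the factor $\beta^{-1}$, which the normalization $h(x_0)=1$ does not fix. Second, your harmonicity caveat is correct: for merely excessive, non-harmonic $h$ the identity $\frac{1}{h}\cG(hf)=\frac{1}{h^2m'}\bigl(\frac{h^2}{s'}f'\bigr)'+\frac{\cG h}{h}f$ produces a nontrivial killing term with rate $-\cG h/h\geq 0$, so the pure second-order form of the generator holds only for harmonic $h$; since the paper applies the theorem only with $h=s$ and explicitly notes $\cG s=0$, nothing downstream is affected, but the statement as written over-claims. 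The only points you leave implicit are routine: that $X^h$ has continuous paths up to its lifetime (immediate from local absolute continuity of the transformed law with respect to $\Pr^x$ on $\cF_t\cap\{t<\zeta\}$, so continuity is inherited from $X$) and that $X^h$ is regular (strict positivity of $p^h$ plus path continuity). In short: your proof is correct where the statement is correct, supplies an argument the paper omits, and usefully exposes both a typo ($h'$ for $s'$) and an over-broad hypothesis (excessive where harmonic is needed for the generator formula) in the statement itself.
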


\begin{proof}
For a proof of this fact see \cite{EH15}. This result also seems to be known in the folk-lore but we were not able to find a proof for the general result.
\end{proof}

{\bf Acknowledgments.}  The author thanks Johannes Ruf, Steve Evans and Alison Etheridge for helpful discussions.

\bibliographystyle{amsalpha}
\bibliography{asymptotic}

\end{document}